\documentclass{article}
\usepackage[utf8]{inputenc}

\usepackage{amsmath,amssymb,amsthm}
\usepackage{bm}
\usepackage{algorithm,algorithmic}
\usepackage{enumerate}
\usepackage[margin=25mm]{geometry}
\usepackage[numbers]{natbib}
\usepackage{mathtools}
\mathtoolsset{showonlyrefs}

\newcommand{\argmin}{\mathop{\rm argmin}\limits}
\newcommand{\dom}{\mathop{\rm dom}}

\newcommand{\innerprod}[2]{\langle #1,#2 \rangle}

\newtheorem{theorem}{Theorem}[section]
\newtheorem{corollary}{Corollary}[section]
\newtheorem{lemma}{Lemma}[section]

\newtheorem{proposition}{Proposition}[section]

\title{Convergence of linesearch-based generalized conditional gradient methods without smoothness assumptions}
\author{Shotaro Yagishita\thanks{Risk Analysis Research Center, The Institute of Statistical Mathematics, Japan, E-mail: syagi@ism.ac.jp} \thanks{Center for Social Data Structuring, Joint Support-Center for Data Science Research, Japan}}
\date{\today}

\begin{document}

\maketitle

\begin{abstract}
The generalized conditional gradient method is a popular algorithm for solving composite problems whose objective function is the sum of a smooth function and a nonsmooth convex function.
Many convergence analyses of the algorithm rely on smoothness assumptions, such as the Lipschitz continuity of the gradient of the smooth part.
This paper provides convergence results of linesearch-based generalized conditional gradient methods without smoothness assumptions.
In particular, we show that a parameter-free variant, which automatically adapts to the H\"older exponent, guarantees convergence even when the gradient of the smooth part of the objective is not H\"older continuous.
\end{abstract}

\section{Introduction}\label{sec:intro}
This paper considers solving composite problems whose objective function is the sum of a smooth function $f$ and a nonsmooth convex function $g$.
As many problems in machine learning, signal processing, and statistical inference can be formulated as them, such problems have attracted interest.

The generalized conditional gradient method was proposed for solving such problems by \citet{mine1981minimization}.
The generalized conditional gradient method minimizes the subproblem that is the sum of the linearization of $f$ and the original $g$, at each iteration.
The algorithm originates from the Frank--Wolfe method \citep{frank1956algorithm}, which is initially an algorithm for convex quadratic programming.
The Frank--Wolfe method was generalized to possibly nonconvex problems with compact convex constraint as the conditional gradient method by \citet{levitin1966constrained}.

Another algorithm for the composite problems is the proximal gradient method \citep{fukushima1981generalized}.
The proximal gradient method not only linearizes $f$ but also adds a quadratic proximal term to the subproblem.
When $g$ is the indicator function of a closed convex set, the proximal gradient method reduces to the projected gradient method.
In large-scale problems, linearized subproblems are sometimes more computationally efficient than quadratic subproblems such as projections \citep{clarkson2010coresets,jaggi2013revisiting,combettes2021complexity}.
Consequently, the generalized conditional gradient method have recently regained attention.
Refer also to the comprehensive survey by \citet{braun2022conditional}.

In many cases, the convergence of the generalized conditional gradient method is established under smoothness assumptions, such as the Lipschitz continuity of the gradient \citep{levitin1966constrained,dunn1978conditional,dunn1979rates,clarkson2010coresets,jaggi2013revisiting,lacoste2016convergence,beck2017first,nesterov2018complexity,ghadimi2019conditional,jiang2019structured,kunisch2024fast,upadhayay2024nonmonotone,takahashi2025accelerated}.
\citet{mine1981minimization} and \citet{bredies2009generalized} provided convergence analyses of the generalized conditional gradient method that does not require smoothness assumptions, but they rely on the exact linesearch technique, which is impractical.
Although a subsequential convergence result that does not rely on smoothness assumptions when using an Armijo-type linesearch was conducted by \citet{bertsekas1999nonlinear}, it assumes that $g$ is the indicator function of a compact convex set.
On the other hand, a recent analysis of the proximal gradient method without assuming smoothness has been conducted by \citet{kanzow2022convergence}, which has led to numerous follow-up studies \citep{de2022proximal,de2023proximal,jia2023convergence,kanzow2024convergence,yagishita2025proximal}.

Inspired by these works, this paper provides a subsequential convergence result for the generalized conditional gradient method exploiting the average-type nonmonotone linesearch \citep{zhang2004nonmonotone} without smoothness assumptions (Theorem \ref{thm:subsequential-convergence}).
The algorithm we analyze coincides with that of \citet{kunisch2024fast} in the monotone case and with that of \citet{upadhayay2024nonmonotone} when $g$ is a compact convex set.
Unlike our results, both of them impose smoothness assumptions on the gradient.
We also provide such a convergence analysis for the parameter-free variant proposed by \citet{ito2023parameter} (Theorem \ref{thm:subsequential-convergence-p-f}).
\citet{ito2023parameter} have shown that under the H\"older continuity of the gradient, their algorithm automatically achieves an appropriate convergence rate.
Our results demonstrate that even when the gradient does not have the H\"older continuity, their algorithm still possesses convergence properties.

The rest of this paper is organized as follows.
The next section is devoted to notation and preliminary results.
We provide main results of the paper in the section \ref{sec:convergence}.
Finally, Section \ref{sec:conclusion} concludes the paper with some remarks.

\section{Notation and Preliminaries}\label{sec:notation}
Let $\mathbb{E}$ be a finite-dimensional inner product space endowed with an inner product $\innerprod{\cdot}{\cdot}$.
The induced norm is denoted by $\|\cdot\|$.
Let $\phi:\mathbb{E}\to(-\infty,\infty]$ be a function.
The domain of $\phi$ is denoted by $\dom \phi\coloneqq\{x\in\mathbb{E}\mid\phi(x)<\infty\}$.
A function $\phi$ is said to be supercoercive if $\lim_{\|x\|\to\infty}\phi(x)/\|x\|=\infty$.

We consider the following composite problem:
\begin{equation}\label{problem:general}
    \underset{x\in\mathbb{E}}{\mbox{minimize}} \quad F(x)\coloneqq f(x)+g(x),
\end{equation}
where $f$ is lower semicontinuous and continuous differentiable on $\dom g$, $g$ is a proper closed convex function, and $F$ is bounded from below.
Any local minimizer $x^*\in\dom g$ of \eqref{problem:general} satisfies
\begin{equation}\label{eq:opt-condition}
    -\nabla f(x^*)\in\partial g(x^*),
\end{equation}
where $\partial g$ is the subdifferential of the convex function $g$.
We call a point $x^*\in\dom g$ satisfying \eqref{eq:opt-condition} a stationary point of \eqref{problem:general}.
When $f$ is convex, the condition \eqref{eq:opt-condition} is sufficient for the optimality.
Suppose also that $g$ is supercoercive.
In the literature \citep{bredies2009generalized,beck2017first,nesterov2018complexity,ghadimi2019conditional,jiang2019structured,ito2023parameter,kunisch2024fast}, the supercoerciveness or a stronger condition is assumed for $g$.
Then, the solution set of the subproblem of the generalized conditional gradient method:
\begin{equation}\label{eq:subproblem}
    \argmin_{v\in\mathbb{E}}\left\{\innerprod{\nabla f(x)}{v}+g(v)\right\}
\end{equation}
is nonempty for any $x\in\dom g$.
The Frank-Wolfe gap at $x\in\dom g$ is defined by
\begin{equation}
    G(x)\coloneqq\max_{v\in\mathbb{E}}\left\{\innerprod{\nabla f(x)}{x-v}+g(x)-g(v)\right\}\ge0.
\end{equation}
It is easy to see that $G(x)=\innerprod{\nabla f(x)}{x-v^*}+g(x)-g(v^*)$ for any solution $v^*$ of the subproblem \eqref{eq:subproblem}.
Therefore, the Frank-Wolfe gap is a computable quantity within the algorithm.
As can be seen from the following lemma, the Frank-Wolfe gap is an optimality measure.

\begin{lemma}[$\mbox{\citep[Theorem 13.6 \& Lemma 13.12]{beck2017first}}$]\label{lem:stationarity-F-W-gap}
$G(x)=0$ if and only if $x$ is a stationary point of \eqref{problem:general}.
Moreover, if $f$ is convex, it holds that $F(x)-F(x^*)\le G(x)$ for any $x\in\dom g$, where $x^*$ is an optimal solution of \eqref{problem:general} (it exists due to the convexity of $f$ and the supercoerciveness of $g$).
\end{lemma}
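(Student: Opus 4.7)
The plan is to read both claims directly off the defining inequalities for the subdifferential of $g$ and the gradient of a convex $f$, with essentially no extra machinery.

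For the first claim, I would observe that taking $v = x$ in the supremum defining $G$ gives $G(x) \ge 0$, so $G(x) = 0$ is equivalent to
\[
\innerprod{\nabla f(x)}{x-v} + g(x) - g(v) \le 0 \quad \text{for every } v \in \mathbb{E}.
\]
Rearranging this as $g(v) \ge g(x) + \innerprod{-\nabla f(x)}{v - x}$ for all $v$, one recognizes exactly the defining subgradient inequality for $-\nabla f(x) \in \partial g(x)$, which by \eqref{eq:opt-condition} is the definition of a stationary point. Hence the equivalence is immediate.

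For the upper bound when $f$ is convex, I would invoke the gradient inequality $f(v) \ge f(x) + \innerprod{\nabla f(x)}{v - x}$, equivalently $f(x) - f(v) \le \innerprod{\nabla f(x)}{x - v}$. Adding $g(x) - g(v)$ to both sides yields, for every $v \in \mathbb{E}$,
\[
F(x) - F(v) \le \innerprod{\nabla f(x)}{x - v} + g(x) - g(v) \le G(x),
\]
and specializing to $v = x^*$ gives $F(x) - F(x^*) \le G(x)$.

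The main ``obstacle'' is essentially nonexistent here: both parts are one-line manipulations once the definitions are unfolded. The only issues worth checking are well-definedness — attainment of the max in $G(x)$ is granted because supercoercivity of $g$ forces $v \mapsto \innerprod{\nabla f(x)}{x-v} + g(x) - g(v)$ to tend to $-\infty$ as $\|v\| \to \infty$, and existence of $x^*$ follows from convexity of $f$, supercoercivity of $g$, and lower semicontinuity of $F$. Both are already flagged in the lemma's statement and the surrounding discussion, so nothing further is needed.
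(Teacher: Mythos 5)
Your proof is correct and is essentially the standard argument: the paper itself states this lemma without proof, citing \citet[Theorem 13.6 \& Lemma 13.12]{beck2017first}, and the cited proofs proceed exactly as you do — unfolding the definition of $G$ into the subgradient inequality for $-\nabla f(x)\in\partial g(x)$, and combining the gradient inequality for convex $f$ with the definition of $G$ for the bound $F(x)-F(x^*)\le G(x)$. Nothing further is needed.
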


The following lemma is used in the analysis of this paper.

\begin{lemma}\label{lem:lsc-F-W-gap}
$G$ is lower semicontinuous.
\end{lemma}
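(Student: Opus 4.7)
The plan is to express $G$ as the pointwise supremum of a family of lower semicontinuous functions and then invoke the standard fact that such a supremum is again lower semicontinuous.

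First I would fix $v\in\dom g$ and define $\psi_v:\dom g\to\mathbb{R}$ by
\[
\psi_v(x)\coloneqq\innerprod{\nabla f(x)}{x-v}+g(x)-g(v).
\]
I would then verify that each $\psi_v$ is lower semicontinuous. The map $x\mapsto\innerprod{\nabla f(x)}{x-v}$ is continuous on $\dom g$ because $f$ is assumed to be continuously differentiable on $\dom g$, so $\nabla f$ is continuous there; $g$ is lower semicontinuous, being proper closed convex; and $-g(v)$ is merely a constant. Hence $\psi_v$ is a sum of a continuous function, a lower semicontinuous function, and a constant, and is therefore lower semicontinuous.

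Next I would note that in the definition of $G$ the supremum can be restricted to $v\in\dom g$: for $v\notin\dom g$ we have $g(v)=+\infty$, so the bracketed expression is $-\infty$ and contributes nothing. Therefore
\[
G(x)=\sup_{v\in\dom g}\psi_v(x),
\]
and since the pointwise supremum of an arbitrary family of lower semicontinuous functions is lower semicontinuous, the conclusion follows immediately.

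The only subtlety — and it is quite minor — is bookkeeping about the domain: $\nabla f$ is defined only on $\dom g$, so $G$ should be regarded either as a function on $\dom g$ or, equivalently, as an extended-real-valued function on $\mathbb{E}$ with the convention $G(x)=+\infty$ for $x\notin\dom g$ (which is consistent with $g(x)=+\infty$ there and preserves lower semicontinuity). I do not anticipate any real obstacle in the argument.
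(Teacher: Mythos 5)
Your proof is correct and is essentially the same argument as the paper's: the paper fixes $v\in\dom g$, passes to the lower limit in $G(x^k)\ge\innerprod{\nabla f(x^k)}{x^k-v}+g(x^k)-g(v)$ using exactly the continuity of $\nabla f$ and lower semicontinuity of $g$ that you invoke, and then takes the supremum over $v$ — i.e., it simply inlines the standard ``supremum of lower semicontinuous functions is lower semicontinuous'' fact that you cite. No substantive difference.
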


\begin{proof}
Let $\{x^k\}$ be a sequence converging to $x^*$.
For all $v\in\dom g$, it holds that
\begin{equation}
    G(x^k)\ge\innerprod{\nabla f(x^k)}{x^k-v}+g(x^k)-g(v),
\end{equation}
and hence taking the lower limit yields
\begin{equation}
    \liminf_{k\to\infty}G(x^k)\ge\innerprod{\nabla f(x^*)}{x^*-v}+g(x^*)-g(v).
\end{equation}
By taking the maximum over $\dom g$, we have
\begin{equation}
    \liminf_{k\to\infty}G(x^k)\ge G(x^*),
\end{equation}
which implies the lower semicontinuity of $G$.
\end{proof}

\section{Subsequential convergence results}\label{sec:convergence}
We first analyze the generalized conditional gradient method with nonmonotone linesearch (Algorithm \ref{alg:GCGM}).
The stepsize at each iteration is determined by backtracking technique to satisfy average-type nonmonotone Armijo condition \citep{zhang2004nonmonotone}.

\begin{algorithm}[H]
\caption{Nonmonotone generalized conditional gradient method}
    \label{alg:GCGM}
    \begin{algorithmic}
    \STATE {\bfseries Input:} $x^0\in\dom g,~ F_0=F(x^0),~ 0<\beta<1,~ 0<\sigma<1,~ 0<p\le1$, and $k=0$.
    \REPEAT
    \STATE Compute $v^k\in\argmin_{x\in\mathbb{E}}\left\{\innerprod{\nabla f(x^k)}{x}+g(x)\right\}$ and $d^k=v^k-x^k$.
    \STATE Find the smallest $i\in\{0,1,2,\ldots\}$ s.t.
    \begin{equation}\label{eq:acceptance-criterion}
        F(x^{k,i})\le F_k-\sigma\beta^iG(x^k)
    \end{equation}
    where
    \begin{equation}
        x^{k,i}=x^k+\beta^id^k.
    \end{equation}
    \STATE Denote $i_k=i$ and choose $p_{k+1}\in[p,1]$.
    \STATE Set $x^{k+1}=x^{k,i_k},~ F_{k+1}=p_{k+1}F(x^{k+1})+(1-p_{k+1})F_k$, and $k\leftarrow k+1$.
    \UNTIL Terminated criterion is satisfied.
    \end{algorithmic}
\end{algorithm}

If $\sigma\le1/2$ and $p=1$, Algorithm \ref{alg:GCGM} reduces to that of \citet{kunisch2024fast}.
\citet{upadhayay2024nonmonotone} have proposed a conditional gradient method using average-type nonmonotone Armijo condition for multi-objective optimization problems.
Their algorithm for single-objective optimization coincides with Algorithm \ref{alg:GCGM} when $g$ is compact convex set and a specific choice of $p_k$ is made.

In the following, we provide a subsequential convergence result for Algorithm \ref{alg:GCGM} without smoothness assumptions.
Using the convexity of $g$, we have the following descent property.

\begin{lemma}\label{lem:descent-property}
Let $x\in\dom g$, $t\in[0,1]$, and $d=v^*-x$, where $v^*$ is a solution of \eqref{eq:subproblem}.
Then, it holds that
\begin{equation}
    F(x+td)\le F(x)-tG(x)+o(t).
\end{equation}
\end{lemma}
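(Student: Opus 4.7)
The plan is to separate $F = f + g$, estimate each summand along the segment $x + td$ for $t \in [0,1]$, and recognize the total as $-tG(x) + o(t)$. First I would note that since $g$ is convex, $\dom g$ is convex, and because both $x$ and $v^* = x + d$ lie in $\dom g$, every point $x + td = (1-t)x + t v^*$ lies in $\dom g$ for $t \in [0,1]$; this guarantees that $f$ is evaluated only where it is differentiable and that $g$ is finite along the segment.

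For the nonsmooth part, convexity of $g$ would give immediately
\begin{equation}
g(x + td) \le (1-t)\, g(x) + t\, g(v^*),
\end{equation}
that is, $g(x+td) - g(x) \le t\bigl(g(v^*) - g(x)\bigr)$. For the smooth part, continuous differentiability of $f$ on $\dom g$ supplies the first-order expansion
\begin{equation}
f(x + td) = f(x) + t \innerprod{\nabla f(x)}{d} + o(t) \quad (t \downarrow 0).
\end{equation}
Adding these two estimates and using $d = v^* - x$ yields
\begin{equation}
F(x+td) - F(x) \le t \bigl[ \innerprod{\nabla f(x)}{v^* - x} + g(v^*) - g(x) \bigr] + o(t),
\end{equation}
and I would identify the bracket with $-G(x)$ via the formula $G(x) = \innerprod{\nabla f(x)}{x - v^*} + g(x) - g(v^*)$ already recorded just after the definition of the Frank--Wolfe gap.

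The only subtle point, and what I would flag as the main obstacle, is that $x$ may lie on the relative boundary of $\dom g$, where the expansion of $f$ must be understood as a one-sided first-order approximation along feasible directions rather than a full two-sided Taylor expansion. Since $d$ points into $\dom g$, the standing hypothesis that $f$ is continuously differentiable on $\dom g$ is exactly what delivers this; beyond that technicality, the proof is a one-line combination of the convexity of $g$ and the smoothness of $f$ along the chosen direction.
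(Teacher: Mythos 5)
Your proposal is correct and follows essentially the same route as the paper's proof: convexity of $g$ along the segment, the first-order expansion of $f$ at $x$ in the direction $d$, and the identification of $\innerprod{\nabla f(x)}{v^*-x}+g(v^*)-g(x)$ with $-G(x)$. The remark about the one-sided expansion at relative-boundary points is a fair technical observation that the paper leaves implicit, but it does not change the argument.
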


\begin{proof}
We see from the convexity of $g$ that
\begin{align}
    F(x+td)-F(x) &=f(x+td)-f(x)+g(x+td)-g(x)\\
    &\le f(x+td)-f(x)+tg(x+d)+(1-t)g(x)-g(x)\\
    &=\innerprod{\nabla f(x)}{td}+o(t)+t(g(x+d)-g(x))\\
    &=t(\innerprod{\nabla f(x)}{v^*-x}+g(v^*)-g(x))+o(t)\\
    &=-tG(x)+o(t),
\end{align}
which is the desired result.
\end{proof}

Using Lemma \ref{lem:descent-property}, the well definedness of Algorithm \ref{alg:GCGM} is established as follows.

\begin{lemma}\label{lem:well-definedness}
The linesearch procedure in Algorithm \ref{alg:GCGM} terminates in a finite number of iterations.
\end{lemma}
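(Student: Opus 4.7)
The plan is to combine the descent estimate of Lemma \ref{lem:descent-property} with a simple induction showing that the reference value $F_k$ appearing in the nonmonotone Armijo condition always dominates the current objective value $F(x^k)$; the standard backtracking argument then closes in a few lines.

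First, I would establish by induction on $k$ the invariant $F(x^k)\le F_k$. The base case $k=0$ is immediate since $F_0=F(x^0)$. For the inductive step, if the linesearch at iteration $k$ terminates with some $i_k$, then the acceptance criterion \eqref{eq:acceptance-criterion} yields $F(x^{k+1})\le F_k-\sigma\beta^{i_k}G(x^k)\le F_k$, so the convex combination $F_{k+1}=p_{k+1}F(x^{k+1})+(1-p_{k+1})F_k$ with $p_{k+1}\in[p,1]\subset(0,1]$ lies between $F(x^{k+1})$ and $F_k$, and in particular $F_{k+1}\ge F(x^{k+1})$.

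Second, with this invariant in hand, fix $k$ and split into cases. If $G(x^k)=0$, the criterion at $i=0$ reduces to $F(x^k)\le F_k$, which holds by the invariant, so $i_k=0$ is accepted. If $G(x^k)>0$, I would invoke Lemma \ref{lem:descent-property} with $t=\beta^i$ and $d=d^k$ to obtain
\begin{equation}
    F(x^{k,i})\le F(x^k)-\beta^i G(x^k)+o(\beta^i)\le F_k-\beta^i G(x^k)+o(\beta^i).
\end{equation}
Rearranging, condition \eqref{eq:acceptance-criterion} becomes equivalent to $o(\beta^i)/\beta^i\le(1-\sigma)G(x^k)$, which is true for all sufficiently large $i$ since $\sigma<1$ and $o(\beta^i)/\beta^i\to 0$ as $i\to\infty$.

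The only mildly delicate point, rather than a real obstacle, is ensuring that the nonmonotone averaging never lets the reference value slip below $F(x^k)$; once that invariant is in place, the argument reduces to the classical backtracking analysis applied to the descent inequality of Lemma \ref{lem:descent-property}. This same invariant will also be useful later when passing from single-step descent to subsequential convergence of $\{F_k\}$.
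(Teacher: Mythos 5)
Your argument is essentially the paper's proof: the same invariant $F(x^k)\le F_k$, established by induction exactly as in the paper's display \eqref{eq:lower-bound}, combined with Lemma \ref{lem:descent-property} and the observation that the criterion \eqref{eq:acceptance-criterion} amounts to $o(\beta^i)/\beta^i\le(1-\sigma)G(x^k)$. For $G(x^k)>0$ this is correct and complete.

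The one step that fails is your treatment of the case $G(x^k)=0$. You claim the criterion at $i=0$ "reduces to $F(x^k)\le F_k$", but $x^{k,0}=x^k+\beta^0 d^k=x^k+d^k=v^k$, which is generally \emph{not} $x^k$: $G(x^k)=0$ only says that $x^k$ is \emph{a} minimizer of the subproblem \eqref{eq:subproblem}, while the algorithm may return a different minimizer $v^k\neq x^k$. So the $i=0$ criterion is $F(v^k)\le F_k$, which need not hold, and your case analysis does not close. In fact this case is genuinely delicate: when $G(x^k)=0$, Lemma \ref{lem:descent-property} only gives $F(x^{k,i})\le F(x^k)+o(\beta^i)$ with a remainder of arbitrary sign, and termination can actually fail (take $f(x)=x^2$, $g$ the indicator of $[-1,1]$, $x^k=0$, $v^k=1$, $F_k=F(x^k)=0$: then $F(x^{k,i})=\beta^{2i}>0$ for every $i$). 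The paper's own proof silently assumes $G(x^k)>0$ — its first display also needs $(1-\sigma)G(x^k)>0$ to absorb the $o(\beta^i)$ term — the implicit understanding being that the algorithm stops (or one may take $v^k=x^k$, $d^k=0$) once a stationary point is reached. You should either adopt that convention explicitly or restrict the claim to iterations with $G(x^k)>0$, rather than assert an incorrect reduction at $i=0$.
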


\begin{proof}
From Lemma \ref{lem:descent-property}, we have
\begin{equation}
    F(x^{k,i})=F(x^k+\beta^id^k)\le F(x^k)-\sigma\beta^iG(x^k)
\end{equation}
for all sufficiently large $i$.
Since $F_0=F(x^0)$ and it follows from the acceptance criterion \eqref{eq:acceptance-criterion} at the previous iteration that
\begin{equation}\label{eq:lower-bound}
    F_k=p_kF(x^k)+(1-p_k)F_{k-1}\ge p_kF(x^k)+(1-p_k)\{F(x^k)+\sigma\beta^{i_{k-1}}G(x^{k-1})\}\ge F(x^k)
\end{equation}
for $k\ge1$, the acceptance criterion \eqref{eq:acceptance-criterion} holds for all sufficiently large $i$.
\end{proof}

The following properties hold for Algorithm \ref{alg:GCGM}.

\begin{proposition}\label{prop:property-GCGM}
Let $\{x^k\}$ be a sequence generated by Algorithm \ref{alg:GCGM}.
Then the following assertions hold:
\begin{enumerate}[(i)]
    \item The sequence $\{F_k\}$ is monotonically nonincreasing and bounded from below by $\inf_{x\in\mathbb{E}}F(x)$.
    In particular, it holds that
    \begin{equation}
        F(x^{k+1})\le F_{k+1}\le F_k-p\sigma\beta^{i_k}G(x^k)
    \end{equation}
    for all $k\ge0$;
    \item The sequences $\{F_k\}$ and $\{F(x^k)\}$ converge to a same finite value;
    \item The sequence $\{x^k\}$ is included in the lower level set $\{x\in\mathbb{E}\mid F(x)\le F(x^0)\}\subset\dom g$;
    \item Let $\{x^k\}_K$ be a subsequence of $\{x^k\}$ converging to some point $x^*$. Then, $\{v^k\}_K$ is bounded.
\end{enumerate}
\end{proposition}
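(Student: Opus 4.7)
The plan is to prove the four items sequentially, with items (i)--(iii) following from essentially bookkeeping about $F_k$ and the acceptance criterion, while (iv) requires the supercoercivity of $g$ together with an argument that the limit point $x^*$ lies in $\dom g$.

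For (i), I would combine the acceptance criterion \eqref{eq:acceptance-criterion} with the definition $F_{k+1}=p_{k+1}F(x^{k+1})+(1-p_{k+1})F_k$: since $F(x^{k+1})\le F_k-\sigma\beta^{i_k}G(x^k)$, a convex combination with $F_k$ gives $F_{k+1}\le F_k-p_{k+1}\sigma\beta^{i_k}G(x^k)\le F_k-p\sigma\beta^{i_k}G(x^k)$, and the inequality $F(x^{k+1})\le F_{k+1}$ is exactly \eqref{eq:lower-bound} at the next index. Monotonicity and boundedness from below by $\inf F$ then follow immediately from $G(x^k)\ge0$ and \eqref{eq:lower-bound}. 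Item (iii) is then a direct consequence: $F(x^k)\le F_k\le F_0=F(x^0)$, and since $F(x^k)<\infty$ forces $x^k\in\dom g$ (as $f$ is finite on $\dom g$), the lower level set inclusion follows.

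For (ii), since $\{F_k\}$ is monotonically nonincreasing and bounded below, it converges to some finite $F^*\ge\inf F$. Rewriting the recursion as $F(x^k)-F_{k-1}=(F_k-F_{k-1})/p_k$ and using $p_k\ge p>0$ together with $F_k-F_{k-1}\to0$ yields $F(x^k)\to F^*$ as well.

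The main work is (iv). Here I would first argue $x^*\in\dom g$: the function $F=f+g$ is lower semicontinuous on $\mathbb{E}$ (since $f$ is lsc and $g$ is proper closed convex), and combining this with (iii) gives $F(x^*)\le\liminf_{k\in K}F(x^k)\le F(x^0)<\infty$, hence $x^*\in\dom g$. Consequently $\nabla f$ is continuous at $x^*$, so $\{\nabla f(x^k)\}_K$ is bounded; also $f$ is continuous at $x^*$ and, using (ii), $g(x^k)=F(x^k)-f(x^k)\to F^*-f(x^*)$, so $\{g(x^k)\}_K$ is bounded. Now the optimality of $v^k$ in \eqref{eq:subproblem} gives
\begin{equation}
    g(v^k)\le g(x^k)+\innerprod{\nabla f(x^k)}{x^k-v^k}\le g(x^k)+\|\nabla f(x^k)\|\bigl(\|x^k\|+\|v^k\|\bigr).
\end{equation}
Suppose for contradiction that $\|v^k\|\to\infty$ along some further subsequence; dividing by $\|v^k\|$ and letting the index go to infinity, the right-hand side stays bounded (since $\{x^k\}_K$, $\{g(x^k)\}_K$, and $\{\nabla f(x^k)\}_K$ are bounded), while the left-hand side $g(v^k)/\|v^k\|\to\infty$ by supercoercivity of $g$, a contradiction. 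Hence $\{v^k\}_K$ is bounded.

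The only nontrivial point is certifying $x^*\in\dom g$ in (iv), which I handle via lower semicontinuity of $F$; the rest reduces to a standard supercoercivity argument once $\nabla f(x^k)$ and $g(x^k)$ are known to be bounded along the subsequence.
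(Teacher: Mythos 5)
Your proposal is correct and follows essentially the same route as the paper's proof: the convex-combination bound for (i), the recursion $F(x^k)=F_{k-1}+(F_k-F_{k-1})/p_k$ for (ii), and a supercoercivity contradiction for (iv). The only (harmless) difference is in (iv), where you test the subproblem optimality against $x^k$ rather than $x^*$ and explicitly certify $x^*\in\dom g$ via lower semicontinuity of $F$ — a point the paper leaves implicit but which is needed for the continuity of $\nabla f$ at $x^*$, so your extra care is welcome rather than a deviation.
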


\begin{proof}
Using the acceptance criterion \eqref{eq:acceptance-criterion}, we have
\begin{equation}\label{eq:monotonicity-GCGM}
    F_{k+1}\le p_{k+1}\{F_k-\sigma\beta^{i_k}G(x^k)\}+(1-p_{k+1})F_k\le F_k-p\sigma\beta^{i_k}G(x^k).
\end{equation}
The lower bound is obtained as in \eqref{eq:lower-bound}.
Since the sequence $\{F_k\}$ is monotonically nonincreasing and bounded from below, $\{F_k\}$ converges to a finite value.
On the other hand, it follows from the definition and monotonicity of $F_k$ that
\begin{equation}
    F_k\ge F(x^k)=F_{k-1}+\frac{F_k-F_{k-1}}{p_k}\ge F_{k-1}+\frac{F_k-F_{k-1}}{p},
\end{equation}
which implies that $\{F(x^k)\}$ converges to the same limit as $\{F_k\}$.
As $\{F_k\}$ is monotonically nonincreasing, one has $F(x^k)\le F_k\le F_0=F(x^0)$.
Let $\{x^k\}_K$ be a subsequence of $\{x^k\}$ converging to some point $x^*$.
We see from the continuity of $\nabla f$ that $\{\nabla f(x^k)\}_K$ is bounded.
To derive a contradiction, suppose that $\{v^k\}_K$ is unbounded.
Without loss of generality, we may assume that $\lim_{k\to\infty, k\in K}\|v_k\|=\infty$.
From the optimality of the subproblem, we have
\begin{equation}
    g(v^k)\le\innerprod{\nabla f(x^k)}{x^*-v^k}+g(x^*)\le\|\nabla f(x^k)\|\|x^*-v^k\|+g(x^*).
\end{equation}
Taking the partial limit yields the contradiction by the supercoerciveness of $g$, and hence $\{v^k\}_K$ is bounded.
\end{proof}

A subsequential convergence result for Algorithm \ref{alg:GCGM} is described as follows.

\begin{theorem}\label{thm:subsequential-convergence}
Any accumulation point of $\{x^k\}$ generated by Algorithm \ref{alg:GCGM} is a stationary point of \eqref{problem:general}.
\end{theorem}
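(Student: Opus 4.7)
The plan is to show that the Frank--Wolfe gap vanishes at any accumulation point $x^*$ of $\{x^k\}$, which by Lemma \ref{lem:stationarity-F-W-gap} yields stationarity. The starting observation is that Proposition \ref{prop:property-GCGM}(i)--(ii) telescopes the recursion $F_{k+1} \le F_k - p\sigma\beta^{i_k}G(x^k)$ to give a finite sum, so $\beta^{i_k}G(x^k) \to 0$. Fix a subsequence $\{x^k\}_K$ converging to $x^*$ and split into two cases according to whether the backtracking indices $\{i_k\}_K$ are bounded.

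In the easy case, along a further subsequence we have $i_k \le \bar{i}$, so $\beta^{i_k}$ stays bounded away from $0$ and thus $G(x^k) \to 0$ on that subsequence. Lower semicontinuity of $G$ (Lemma \ref{lem:lsc-F-W-gap}) and nonnegativity then force $G(x^*) = 0$.

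The substantive case is $i_k \to \infty$. Here I would exploit the fact that the Armijo step of size $t_k \coloneqq \beta^{i_k-1}$ was \emph{rejected}, i.e.\ $F(x^k + t_k d^k) > F_k - \sigma t_k G(x^k)$, and combine this with $F_k \ge F(x^k)$ from \eqref{eq:lower-bound} to obtain $F(x^k + t_k d^k) - F(x^k) > -\sigma t_k G(x^k)$. Splitting $F = f + g$ and invoking convexity of $g$ exactly as in the proof of Lemma \ref{lem:descent-property} gives $g(x^k + t_k d^k) - g(x^k) \le t_k(g(v^k) - g(x^k))$; dividing by $t_k > 0$ and rearranging using the definition of $G(x^k) = \langle \nabla f(x^k), x^k - v^k\rangle + g(x^k) - g(v^k)$, the $g$-terms cancel to yield
\begin{equation}
    (1-\sigma)G(x^k) < \frac{f(x^k + t_k d^k) - f(x^k)}{t_k} - \langle \nabla f(x^k), d^k\rangle.
\end{equation}
Writing the right-hand side as $t_k^{-1}\int_0^{t_k}\langle \nabla f(x^k + sd^k) - \nabla f(x^k), d^k\rangle\,ds$, I would bound it by $\|d^k\| \sup_{s\in[0,t_k]}\|\nabla f(x^k + sd^k) - \nabla f(x^k)\|$. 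By Proposition \ref{prop:property-GCGM}(iv), $\{d^k\}_K$ is bounded; since $t_k \to 0$ and $\nabla f$ is continuous at $x^*$, the supremum tends to $0$. Thus $G(x^k) \to 0$ along $K$, and applying Lemma \ref{lem:lsc-F-W-gap} again gives $G(x^*) = 0$.

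The only real subtlety I anticipate is the cancellation trick in the second case: one needs to phrase the Armijo failure in terms of $F(x^k)$ rather than $F_k$ (which is precisely what \eqref{eq:lower-bound} enables), and then recognize that the convex upper bound on the increment of $g$ combines with the Frank--Wolfe gap to eliminate all $g$-values, leaving a purely smooth quantity that is controlled by the continuity of $\nabla f$ and the boundedness of $\{d^k\}_K$. Everything else is a direct application of the preparatory results.
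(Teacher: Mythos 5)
Your proposal is correct and follows essentially the same route as the paper's proof: the same case split on whether $\beta^{i_k}$ stays bounded away from zero, the same use of $F_k\ge F(x^k)$ to turn the rejected Armijo step into the inequality $(1-\sigma)G(x^k)\le$ (a purely smooth remainder), and the same cancellation of the $g$-terms via convexity. The only immaterial difference is that you control that remainder by the integral representation $t_k^{-1}\int_0^{t_k}\langle\nabla f(x^k+sd^k)-\nabla f(x^k),d^k\rangle\,ds$ instead of the mean value theorem used in the paper.
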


\begin{proof}
Let $\{x^k\}_K$ be a subsequence of $\{x^k\}$ converging to some point $x^*$.
If $\liminf_{k\to\infty, k\in K}\beta^{i_k}>0$, it follows from Proposition \ref{prop:property-GCGM} and Lemma \ref{lem:lsc-F-W-gap} that
\begin{align}
    0=\liminf_{k\to\infty, k\in K}F_k-F_{k+1} &\ge\liminf_{k\to\infty, k\in K}p\sigma\beta^{i_k}G(x^k)\\
    &\ge p\sigma\left(\liminf_{k\to\infty, k\in K}\beta^{i_k}\right)\liminf_{k\to\infty, k\in K}G(x^k)\\
    &\ge p\sigma\left(\liminf_{k\to\infty, k\in K}\beta^{i_k}\right)G(x^*).
\end{align}
Thus, we have $G(x^*)=0$, which indicates the stationarity of $x^*$ by Lemma \ref{lem:stationarity-F-W-gap}.

The remaining part of this proof assumes that $\liminf_{k\to\infty, k\in K}\beta^{i_k}=0$.
Without loss of generality, we may suppose that $\beta^{i_k}\to_K0$ and that $i_k\ge1$ holds for all $k\in K$, which implies that
\begin{equation}\label{eq:violation}
    F(\hat{x}^k)>F_k-\sigma\beta^{i_k-1}G(x^k)\ge F(x^k)-\sigma\beta^{i_k-1}G(x^k)
\end{equation}
where $\hat{x}^k\coloneqq x^{k,i_k-1}$ and the last inequality follows from Proposition \ref{prop:property-GCGM} (i).

From Proposition \ref{prop:property-GCGM} (iv), $\{v^k\}_K$ is bounded, and hence $\{d^k\}_K$ is also bounded.
Since $\hat{x}^k-x^k=\beta^{i_k-1}d^k$, it holds that $\hat{x}^k\to_Kx^*$.
On the other hand, by the mean-value theorem, there exist a convex combination $\xi^k$ of $x^k$ and $\hat{x}^k$ such that
\begin{equation}
    F(x^k)-F(\hat{x}^k)=g(x^k)-g(\hat{x}^k)-\innerprod{\nabla f(\xi^k)}{\hat{x}^k-x^k}.
\end{equation}
Combining this with \eqref{eq:violation} yields
\begin{align}
    \sigma\beta^{i_k-1}G(x^k) &>F(x^k)-F(\hat{x}^k)\\
    &=g(x^k)-g(\hat{x}^k)-\innerprod{\nabla f(\xi^k)}{\hat{x}^k-x^k}\\
    &\ge\beta^{i_k-1}(g(x^k)-g(v^k))-\innerprod{\nabla f(\xi^k)}{\hat{x}^k-x^k}\\
    &=\beta^{i_k-1}(g(x^k)-g(v^k))-\innerprod{\nabla f(x^k)}{\beta^{i_k-1}d^k}+\innerprod{\nabla f(x^k)-\nabla f(\xi^k)}{\beta^{i_k-1}d^k}\\
    &=\beta^{i_k-1}G(x^k)+\beta^{i_k-1}\innerprod{\nabla f(x^k)-\nabla f(\xi^k)}{d^k}.
\end{align}
By rearranging the above and dividing both sides by $\beta^{i_k-1}$, we obtain
\begin{equation}\label{eq:bound-F-W-gap}
    (1-\sigma)G(x^k)\le\innerprod{\nabla f(\xi^k)-\nabla f(x^k)}{d^k}\le\|\nabla f(\xi^k)-\nabla f(x^k)\|\|d^k\|.
\end{equation}
Since both $\{x^k\}_K$ and $\{\hat{x}^k\}_K$ converge to $x^*$, $\{\xi^k\}_K$ also converges to $x^*$.
Using the continuity of $\nabla f$, the boundedness of $\{d^k\}_K$, and Lemma \ref{lem:lsc-F-W-gap}, we see from \eqref{eq:bound-F-W-gap} that
\begin{equation}
    G(x^*)\le\liminf_{k\to\infty, k\in K}G(x^k)\le\lim_{k\to\infty, k\in K}\frac{1}{1-\sigma}\|\nabla f(\xi^k)-\nabla f(x^k)\|\|d^k\|=0.
\end{equation}
This completes the proof.
\end{proof}

Theorem \ref{thm:subsequential-convergence} is an extension of that of \citet[Section 2.2]{bertsekas1999nonlinear}, which is the result for the case where $p=1$ and $g$ is compact convex set.
By carefully examining the proof of the previous theorem, we see that if a convergent subsequence $\{x^k\}_K$ exists, then it holds that $\liminf_{k\to\infty, k\in K}G(x^k)=0$.
From this, we obtain the following result.

\begin{corollary}\label{cor:F-G-gap-convergence}
If there exists an accumulation point of $\{x^k\}$ generated by Algorithm \ref{alg:GCGM}, then we have
\begin{equation}
    \lim_{k\to\infty}\min_{0\le l\le k}G(x^l)=0.
\end{equation}
\end{corollary}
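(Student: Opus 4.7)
The plan is to leverage the observation, already flagged in the remark following Theorem \ref{thm:subsequential-convergence}, that the two cases of that proof actually give slightly more than stationarity of the accumulation point: they yield $\liminf_{k\to\infty,k\in K}G(x^k)=0$ along the convergent subsequence. The corollary then follows from an elementary argument on the nonincreasing envelope $\min_{0\le l\le k}G(x^l)$.

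First, I would fix an accumulation point $x^*$ of $\{x^k\}$ and a subsequence $\{x^k\}_K$ with $x^k\to_K x^*$. Then I would revisit the two cases of the proof of Theorem \ref{thm:subsequential-convergence}. In the case $\liminf_{k\to\infty,k\in K}\beta^{i_k}>0$, the chain of inequalities stemming from Proposition \ref{prop:property-GCGM}(i) gives
\begin{equation}
    0=\liminf_{k\to\infty,k\in K}(F_k-F_{k+1})\ge p\sigma\Bigl(\liminf_{k\to\infty,k\in K}\beta^{i_k}\Bigr)\liminf_{k\to\infty,k\in K}G(x^k),
\end{equation}
so $\liminf_{k\to\infty,k\in K}G(x^k)\le0$, and combined with $G\ge0$ this forces $\liminf_{k\to\infty,k\in K}G(x^k)=0$. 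In the remaining case $\liminf_{k\to\infty,k\in K}\beta^{i_k}=0$, inequality \eqref{eq:bound-F-W-gap} together with the boundedness of $\{d^k\}_K$ (Proposition \ref{prop:property-GCGM}(iv)) and the continuity of $\nabla f$ applied at the common limit $x^*$ of $\{x^k\}_K$ and $\{\xi^k\}_K$ again yields $\liminf_{k\to\infty,k\in K}G(x^k)=0$.

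Having established $\liminf_{k\to\infty,k\in K}G(x^k)=0$, I would finish as follows. Extract a further subsequence $\{k_j\}\subset K$ along which $G(x^{k_j})\to0$. The sequence $m_k\coloneqq\min_{0\le l\le k}G(x^l)$ is nonnegative and nonincreasing in $k$, hence convergent. For any $k\ge k_j$ one has $m_k\le G(x^{k_j})$, so $\lim_{k\to\infty}m_k\le G(x^{k_j})$ for every $j$; letting $j\to\infty$ gives $\lim_{k\to\infty}m_k=0$, as required.

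There is no real obstacle here beyond bookkeeping: the substantive work is already done inside the proof of Theorem \ref{thm:subsequential-convergence}, and the only thing to be careful about is to state clearly that both cases of that argument actually produce $\liminf_{k\in K}G(x^k)=0$ (rather than merely $G(x^*)=0$), since it is the former, combined with monotonicity of $m_k$, that drives the corollary.
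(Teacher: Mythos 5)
Your proposal is correct and follows exactly the route the paper intends: the remark preceding the corollary already notes that both cases of the proof of Theorem \ref{thm:subsequential-convergence} yield $\liminf_{k\to\infty,k\in K}G(x^k)=0$, and the corollary then follows from the monotonicity of $\min_{0\le l\le k}G(x^l)$. You merely make explicit the bookkeeping that the paper leaves to the reader, so there is nothing to add.
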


From Corollary \ref{cor:F-G-gap-convergence}, we see that if an accumulation point exists, for any $\varepsilon>0$ we can obtain a point $x$ satisfying
\begin{equation}
    G(x)\le\varepsilon
\end{equation}
within a finite number of iterations.

\subsection{Convergence result for parameter-free variant}
Next, we conduct a convergence analysis of the parameter-free generalized conditional gradient method proposed by \citet{ito2023parameter} (Algorithm \ref{alg:p-f-GCGM}) without smoothness assumption.
The parameter-free variant employs a more complicated backtracking procedure than Algorithm \ref{alg:GCGM}.

\begin{algorithm}[H]
\caption{Parameter-free generalized conditional gradient method \citep{ito2023parameter}}
    \label{alg:p-f-GCGM}
    \begin{algorithmic}
    \STATE {\bfseries Input:} $x^0\in\dom g,~ L_{-1}>0$, and $k=0$.
    \REPEAT
    \STATE Compute $v^k\in\argmin_{x\in\mathbb{E}}\left\{\innerprod{\nabla f(x^k)}{x}+g(x)\right\}$ and $d^k=v^k-x^k$.

    \STATE Find the smallest $i\in\{0,1,2,\ldots\}$ s.t.
    \begin{equation}\label{eq:acceptance-criterion-p-f}
        F(x^{k,i})\le F(x^k)-\frac{1}{2}\tau_k^{(i)}G(x^k)+\frac{1}{2}L_k^{(i)}(\tau_k^{(i)})^2\|d^k\|^2
    \end{equation}
    where
    \begin{align}
        L_k^{(i)} &=2^{i-1}L_{k-1},\\
        \tau_k^{(i)} &=\min\{1,G(x^k)/(2L_k^{(i)}\|d^k\|^2)\},\\
        x^{k,i} &=x^k+\tau_k^{(i)}d^k.
    \end{align}
    \STATE Set $x^{k+1}=x^{k,i},~ L_k=L_k^{(i)},~ \tau_k=\tau_k^{(i)}$, and $k\leftarrow k+1$.
    \UNTIL Terminated criterion is satisfied.
    \end{algorithmic}
\end{algorithm}

\citet{ito2023parameter} have showed that under the $\nu$-H\"older continuity of $\nabla f$ and the boundedness of $\dom g$, the iteration complexity of Algorithm \ref{alg:p-f-GCGM} for reaching $G(x^k)\le\varepsilon$ is $O(\varepsilon^{-1-1/\nu})$.
When the problem is assumed to have the uniform convex structure with modulus $\rho\ge2$\footnote{We say that the problem \eqref{problem:general} has a uniform convex structure if there exist $\kappa>0$ and $\rho\ge2$ such that for any $x\in\dom g$ and any solution $v^*$ of \eqref{eq:subproblem}, it holds that $\innerprod{\nabla f(x)}{v}+g(v)-\innerprod{\nabla f(x)}{v^*}-g(v^*)\ge\frac{\kappa}{\rho}\|v-v^*\|^\rho$ for all $v\in\mathbb{E}$.} instead of the boundedness of $\dom g$, the iteration complexity has been shown to be $O(\varepsilon^{-1-(\rho-1-\nu)/(\rho\nu)})$.
Furthermore, if the convexity of $f$ is assumed, the above complexity can be improved.
Many instances of problem \eqref{problem:general} satisfy the H\"older continuity assumption of $\nabla f$.
In fact, when $\dom g$ is compact and $\nabla f:\mathbb{E}\to\mathbb{E}$ is semi-algebraic, this assumption holds \citep[Proposition C.1]{bolte2020ah}.
However, determining in advance whether $\nabla f$ is semi-algebraic or, more generally, whether $\nabla f$ is H\"older continuous is not always easy and may not be user-friendly.

Here, we show a convergence result of Algorithm \ref{alg:p-f-GCGM} without the H\"older continuity of the gradient.
This ensures that Algorithm \ref{alg:p-f-GCGM} can be applied safely even when it is unclear whether the H\"older condition is satisfied.
We first show the well definedness of the algorithm.

\begin{lemma}\label{lem:well-definedness-p-f}
The linesearch procedure in Algorithm \ref{alg:p-f-GCGM} terminates in a finite number of iterations.
\end{lemma}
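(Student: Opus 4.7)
The plan is to verify that the acceptance criterion \eqref{eq:acceptance-criterion-p-f} is satisfied for all sufficiently large $i$. Assume $G(x^k) > 0$; otherwise $x^k$ is already a stationary point by Lemma \ref{lem:stationarity-F-W-gap}, and the algorithm would terminate before entering the linesearch. Under this assumption we also have $d^k \ne 0$, since $d^k = 0$ would force $v^k = x^k$ and hence $G(x^k) = \innerprod{\nabla f(x^k)}{x^k - v^k} + g(x^k) - g(v^k) = 0$, a contradiction. In particular, the quantity $\tau_k^{(i)}$ is well defined.

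Since $L_k^{(i)} = 2^{i-1}L_{k-1}$ tends to infinity with $i$, we have $G(x^k) < 2 L_k^{(i)}\|d^k\|^2$ for all sufficiently large $i$, so the minimum in the definition of $\tau_k^{(i)}$ is attained at its second argument and
\[
\tau_k^{(i)} = \frac{G(x^k)}{2 L_k^{(i)} \|d^k\|^2} \longrightarrow 0.
\]
A direct computation gives $\tfrac{1}{2} L_k^{(i)}(\tau_k^{(i)})^2 \|d^k\|^2 = \tfrac{1}{4}\tau_k^{(i)} G(x^k)$, so for such $i$ the acceptance criterion simplifies to
\[
F(x^k + \tau_k^{(i)} d^k) - F(x^k) \le -\tfrac{1}{4}\tau_k^{(i)} G(x^k).
\]

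The remaining step is to invoke Lemma \ref{lem:descent-property} with $x = x^k$ and $d = d^k$, yielding $F(x^k + \tau d^k) - F(x^k) \le -\tau G(x^k) + o(\tau)$ as $\tau \to 0^+$. Substituting $\tau = \tau_k^{(i)}$, dividing by $\tau_k^{(i)} > 0$, and rearranging, the desired inequality reduces to $o(\tau_k^{(i)})/\tau_k^{(i)} \le \tfrac{3}{4} G(x^k)$, which holds for all sufficiently large $i$ because the left-hand side vanishes while $G(x^k) > 0$. I do not anticipate a serious obstacle: the only points requiring care are identifying which branch of the minimum defining $\tau_k^{(i)}$ is active (only the second branch matters once $i$ is large enough) and handling the degenerate case $G(x^k) = 0$ separately, which as noted above falls outside the linesearch.
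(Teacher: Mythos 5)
Your proof is correct and follows essentially the same route as the paper: both invoke Lemma \ref{lem:descent-property} together with the fact that $\tau_k^{(i)}\to0$ as $i\to\infty$ to absorb the $o(\tau_k^{(i)})$ term. The only cosmetic differences are that you compute the quadratic term exactly as $\tfrac{1}{4}\tau_k^{(i)}G(x^k)$ whereas the paper simply drops it as nonnegative, and that you explicitly dispose of the degenerate case $G(x^k)=0$ (where in fact $\tau_k^{(i)}=0$ and the criterion holds at $i=0$), which the paper leaves implicit.
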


\begin{proof}
From Lemma \ref{lem:descent-property}, we have
\begin{equation}
    F(x^{k,i})=F(x^k+\tau_k^{(i)}d^k)\le F(x^k)-\frac{1}{2}\tau_k^{(i)}G(x^k)
\end{equation}
for all sufficiently large $i$.
Thus, the acceptance criterion \eqref{eq:acceptance-criterion-p-f} holds for all sufficiently large $i$.
\end{proof}

The following properties hold for Algorithm \ref{alg:p-f-GCGM}.

\begin{proposition}\label{prop:property-p-f-GCGM}
Let $\{x^k\}$ be a sequence generated by Algorithm \ref{alg:p-f-GCGM}.
Then the following assertions hold:
\begin{enumerate}[(i)]
    \item The sequence $\{F(x^k)\}$ is monotonically nonincreasing and bounded from below by $\inf_{x\in\mathbb{E}}F(x)$.
    In particular, it holds that
    \begin{equation}
        F(x^{k+1})\le F(x^k)-\frac{1}{4}\tau_kG(x^k)
    \end{equation}
    for all $k\ge0$;
    \item The sequence $\{F(x^k)\}$ converges to a finite value;
    \item The sequence $\{x^k\}$ is included in the lower level set $\{x\in\mathbb{E}\mid F(x)\le F(x^0)\}\subset\dom g$;
    \item Let $\{x^k\}_K$ be a subsequence of $\{x^k\}$ converging to some point $x^*$. Then, $\{v^k\}_K$ is bounded.
\end{enumerate}
\end{proposition}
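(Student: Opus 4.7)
The plan is to mirror the structure of the proof of Proposition \ref{prop:property-GCGM}, with the main new ingredient being a short case analysis on the clipping in $\tau_k$ that converts the backtracking inequality \eqref{eq:acceptance-criterion-p-f} into the clean per-iteration descent $F(x^{k+1})\le F(x^k)-\frac{1}{4}\tau_k G(x^k)$.

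First I would prove (i). Starting from the acceptance criterion \eqref{eq:acceptance-criterion-p-f} at the accepted index, I split according to the definition $\tau_k=\min\{1,G(x^k)/(2L_k\|d^k\|^2)\}$. If the minimum is attained by $G(x^k)/(2L_k\|d^k\|^2)$, then $L_k\tau_k\|d^k\|^2=\frac{1}{2}G(x^k)$, so the penalty term satisfies $\frac{1}{2}L_k\tau_k^2\|d^k\|^2=\frac{1}{4}\tau_k G(x^k)$. If instead $\tau_k=1$, then $G(x^k)\ge 2L_k\|d^k\|^2$, and the penalty term is bounded by $\frac{1}{2}L_k\|d^k\|^2\le \frac{1}{4}G(x^k)=\frac{1}{4}\tau_k G(x^k)$. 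In both cases \eqref{eq:acceptance-criterion-p-f} yields $F(x^{k+1})\le F(x^k)-\frac{1}{4}\tau_k G(x^k)$. Since $\tau_k\ge 0$ and $G(x^k)\ge 0$, the sequence $\{F(x^k)\}$ is monotonically nonincreasing, and it is bounded below by $\inf_{x\in\mathbb{E}}F(x)$ since $F$ is assumed bounded from below.

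Items (ii) and (iii) then follow immediately: a monotonically nonincreasing sequence bounded from below converges to a finite value; and the monotonicity gives $F(x^k)\le F(x^0)$, so $\{x^k\}$ lies in the lower level set, which is contained in $\dom g$ because $F(x)=\infty$ whenever $g(x)=\infty$.

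For (iv) I would reproduce verbatim the argument used in Proposition \ref{prop:property-GCGM}(iv), which nowhere uses the specific form of the stepsize. Suppose $\{x^k\}_K\to x^*$. By continuity of $\nabla f$ on $\dom g$, $\{\nabla f(x^k)\}_K$ is bounded. If $\{v^k\}_K$ were unbounded, we could pass to a subsequence with $\|v^k\|\to\infty$, and the optimality of $v^k$ for the subproblem gives
\begin{equation}
    g(v^k)\le\innerprod{\nabla f(x^k)}{x^*-v^k}+g(x^*)\le\|\nabla f(x^k)\|\|x^*-v^k\|+g(x^*),
\end{equation}
so $g(v^k)/\|v^k\|$ stays bounded, contradicting the supercoerciveness of $g$. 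The only nontrivial step is the case split in (i); everything else is routine and parallels the previous proposition.
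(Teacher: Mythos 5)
Your proposal is correct and follows essentially the same route as the paper: the paper's proof condenses your two-case analysis into the single observation that the definition of $\tau_k$ gives $2L_k\tau_k\|d^k\|^2\le G(x^k)$, whence $\frac{1}{2}L_k\tau_k^2\|d^k\|^2\le\frac{1}{4}\tau_kG(x^k)$, and items (ii)--(iv) are handled exactly as you do, with (iv) repeating the supercoerciveness argument from Proposition \ref{prop:property-GCGM}.
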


\begin{proof}
From the definition of $\tau_k$, it holds that
\begin{equation}
    2L_k\tau_k\|d^k\|^2\le G(x^k).
\end{equation}
Using the acceptance criterion \eqref{eq:acceptance-criterion-p-f}, we have
\begin{align}\label{eq:monotonicity-p-f-GCGM}
    F(x^{k+1}) &\le F(x^k)-\frac{1}{2}\tau_kG(x^k)+\frac{1}{2}L_k(\tau_k)^2\|d^k\|^2\\
    &\le F(x^k)-\frac{1}{2}\tau_kG(x^k)+\frac{1}{4}\tau_kG(x^k)\\
    &=F(x^k)-\frac{1}{4}\tau_kG(x^k).
\end{align}
Since the sequence $\{F(x^k)\}$ is monotonically nonincreasing and bounded from below, $\{F(x^k)\}$ converges to a finite value.
As $\{F(x^k)\}$ is monotonically nonincreasing, one has $F(x^k)\le F(x^0)$.
Let $\{x^k\}_K$ be a subsequence of $\{x^k\}$ converging to some point $x^*$.
We see from the continuity of $\nabla f$ that $\{\nabla f(x^k)\}_K$ is bounded.
To derive a contradiction, suppose that $\{v^k\}_K$ is unbounded.
Without loss of generality, we may assume that $\lim_{k\to\infty, k\in K}\|v_k\|=\infty$.
From the optimality of the subproblem, we have
\begin{equation}
    g(v^k)\le\innerprod{\nabla f(x^k)}{x^*-v^k}+g(x^*)\le\|\nabla f(x^k)\|\|x^*-v^k\|+g(x^*).
\end{equation}
Taking the partial limit yields the contradiction by the supercoerciveness of $g$, and hence $\{v^k\}_K$ is bounded.
\end{proof}

Using Proposition \ref{prop:property-p-f-GCGM}, we obtain the following convergence result.

\begin{theorem}\label{thm:subsequential-convergence-p-f}
Suppose that the lower level set $\{x\in\mathbb{E}\mid F(x)\le F(x^0)\}$ is bounded.
Let $\{x^k\}$ be a sequence generated by Algorithm \ref{alg:p-f-GCGM}.
Then there exists an accumulation point of $\{x^k\}$ being a stationary point of \eqref{problem:general} and it holds that
\begin{equation}
    \lim_{k\to\infty}\min_{0\le l\le k}G(x^l)=0.
\end{equation}
\end{theorem}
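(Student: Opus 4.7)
The plan is to establish $\liminf_{k\to\infty} G(x^k) = 0$ by a contradiction argument that combines the descent estimate from Proposition \ref{prop:property-p-f-GCGM} with a rejection analysis analogous to the one used in the proof of Theorem \ref{thm:subsequential-convergence}, adapted to the parameter-free backtracking. First, the boundedness of the level set gives $\{x^k\}$ bounded, hence $\{\nabla f(x^k)\}$ bounded by continuity, and then $\{v^k\}$ (and thus $\{d^k\}$) is uniformly bounded by the supercoercivity argument used in Proposition \ref{prop:property-p-f-GCGM} (iv) — the argument there is in fact uniform in $k$ once $\{x^k\}$ is bounded. An analogous argument using an affine minorant of $g$ also yields a uniform bound $G(x^k) \le M$. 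Summing the descent inequality $F(x^{k+1}) \le F(x^k) - \frac{1}{4}\tau_k G(x^k)$ and using that $F$ is bounded below yields $\sum_{k=0}^{\infty} \tau_k G(x^k) < \infty$, so $\tau_k G(x^k) \to 0$.

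Suppose for contradiction that $\liminf_{k\to\infty} G(x^k) > 0$, so $G(x^k) \ge c > 0$ for some $c$ and all $k \ge k_0$. Then $\tau_k \to 0$; since $\|d^k\|$ is uniformly bounded by some $D$ and $G(x^k) \ge c$, the definition $\tau_k = \min\{1, G(x^k)/(2L_k\|d^k\|^2)\}$ forces $\tau_k = G(x^k)/(2L_k\|d^k\|^2)$ with $L_k \to \infty$. Because $L_k = 2^{i_k - 1} L_{k-1}$, this in turn forces $i_k \ge 1$ eventually. For such $k$, set $\hat L_k = L_k/2$, $\hat\tau_k = \min\{1, 2\tau_k\}$, and $\hat x^k = x^k + \hat\tau_k d^k$; since $\tau_k \to 0$, we have $\hat\tau_k = 2\tau_k \to 0$, and the failure of the acceptance criterion at $i = i_k - 1$ gives
\begin{equation}
    F(\hat x^k) > F(x^k) - \tfrac{1}{2}\hat\tau_k G(x^k) + \tfrac{1}{2}\hat L_k \hat\tau_k^2 \|d^k\|^2 \ge F(x^k) - \tfrac{1}{2}\hat\tau_k G(x^k),
\end{equation}
where the second inequality drops the nonnegative quadratic term, which is the key simplification that makes the parameter-free method tractable at this point.

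Mirroring the mean-value step in the proof of Theorem \ref{thm:subsequential-convergence}, apply the mean value theorem to $f$ on the segment $[x^k, \hat x^k]$ and combine with the convexity estimate $g(\hat x^k) \le (1 - \hat\tau_k) g(x^k) + \hat\tau_k g(v^k)$ to obtain, for some $\xi^k$ on this segment,
\begin{equation}
    F(x^k) - F(\hat x^k) \ge \hat\tau_k \bigl[ G(x^k) + \innerprod{\nabla f(x^k) - \nabla f(\xi^k)}{d^k} \bigr].
\end{equation}
Combining with the rejection inequality and dividing by $\hat\tau_k > 0$ yields $\frac{1}{2} G(x^k) < \|\nabla f(\xi^k) - \nabla f(x^k)\| \|d^k\|$. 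Extracting a convergent subsequence $x^{k_j} \to x^*$ (possible by the assumed boundedness of the level set) and noting that $\hat x^{k_j} - x^{k_j} = \hat\tau_{k_j} d^{k_j} \to 0$ so $\xi^{k_j} \to x^*$ as well, the continuity of $\nabla f$ forces the right-hand side to $0$, contradicting $G(x^{k_j}) \ge c > 0$. Hence $\liminf_k G(x^k) = 0$, which immediately gives $\min_{0\le l\le k} G(x^l) \to 0$. Selecting a convergent subsequence $x^{k_j} \to x^*$ along which $G(x^{k_j}) \to 0$ and invoking Lemma \ref{lem:lsc-F-W-gap} to get $G(x^*) = 0$, then Lemma \ref{lem:stationarity-F-W-gap}, yields the stationary accumulation point. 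The main obstacle is the parameter-free rule: since $L_k$ may grow unboundedly, the descent inequality alone does not force $G(x^k) \to 0$, so one must extract information from the failed trial step as in the Armijo analysis, taking care to dispose of the extra $\frac{1}{2}\hat L_k\hat\tau_k^2\|d^k\|^2$ term in the correct direction.
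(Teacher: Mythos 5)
Your proof is correct in substance and rests on the same key technical device as the paper's: when the trial step $i=i_k-1$ is rejected, drop the nonnegative quadratic term, apply the mean value theorem to $f$ together with the convexity estimate $g(\hat{x}^k)\le(1-\hat{\tau}_k)g(x^k)+\hat{\tau}_kg(v^k)$, and divide by the stepsize to obtain $\tfrac{1}{2}G(x^k)<\|\nabla f(\xi^k)-\nabla f(x^k)\|\|d^k\|$, which is exactly the paper's inequality \eqref{eq:bound-F-W-gap-p-f}. The organization differs: the paper runs a case analysis ($\{L_k\}$ bounded vs.\ unbounded, then $\liminf\tau_k>0$ vs.\ $=0$ along a convergent subsequence, handling the bounded-$L_k$, vanishing-$\tau_k$ case directly from the formula $\tau_k=G(x^k)/(2L_k\|d^k\|^2)$), whereas you assume $\liminf_kG(x^k)>0$, deduce $\tau_k\to0$ from $\sum_k\tau_kG(x^k)<\infty$, deduce $L_k\to\infty$ from the stepsize formula, and reach a single contradiction. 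Your version is arguably tidier, at the price of needing the uniform (not merely subsequential) boundedness of $\{v^k\}$, which you correctly justify via compactness of the level set and supercoercivity of $g$; the uniform bound $G(x^k)\le M$ you mention is never actually used and can be dropped.

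One step is misstated: $L_k\to\infty$ does \emph{not} force $i_k\ge1$ eventually. For instance, $i_k$ could alternate between $0$ and $3$, so that $L_k$ is alternately halved and quadrupled, giving $L_k\to\infty$ while $i_k=0$ for infinitely many $k$. What does follow is that $i_k\ge1$ for infinitely many $k$ (otherwise $L_k$ would eventually halve at every iteration and tend to $0$). This is harmless for your argument, since the rejection inequality is only needed along a subsequence and you extract a convergent subsequence at the end anyway; simply restrict attention to the infinitely many indices with $i_k\ge1$ (along which $G(x^k)\ge c$ and $\tau_k\to0$ still hold) before passing to the limit. With that one-line repair the proof is complete.
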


\begin{proof}
Since the lower level set $\{x\in\mathbb{E}\mid F(x)\le F(x^0)\}$ is bounded, it follows from Proposition \ref{prop:property-p-f-GCGM} (iii) that $\{x^k\}$ is bounded.
We first consider the case where $\{L_k\}$ is bounded. 
There is a subsequence $\{x^k\}_K$ of $\{x^k\}$ converging to some point $x^*$.

If $\liminf_{k\to\infty, k\in K}\tau_k>0$, it follows from Proposition \ref{prop:property-p-f-GCGM} that
\begin{align}
    0=\liminf_{k\to\infty, k\in K}F(x^k)-F(x^{k+1}) &\ge\liminf_{k\to\infty, k\in K}\frac{1}{4}\tau_kG(x^k)\\
    &\ge \frac{1}{4}\left(\liminf_{k\to\infty, k\in K}\tau_k\right)\liminf_{k\to\infty, k\in K}G(x^k),
\end{align}
which implies that $\lim_{k\to\infty}\min_{0\le l\le k}G(x^l)=0$.
On the other hand, as we see from Lemma \ref{lem:lsc-F-W-gap} that $G(x^*)=0$, $x^*$ is a stationary point by Lemma \ref{lem:stationarity-F-W-gap}.

If $\liminf_{k\to\infty, k\in K}\tau_k=0$, there is a subsequence $\{\tau_k\}_{\tilde{K}}$ of $\{\tau_k\}_K$ such that $\tau_k\to_{\tilde{K}}0$ and that $\tau_k<1$ holds for all $k\in\tilde{K}$.
Then, it holds that $\tau_k=G(x^k)/(2L_k\|d^k\|^2)$ for $k\in\tilde{K}$.
From Proposition \ref{prop:property-p-f-GCGM} (iv), $\{v^k\}_{\tilde{K}}$ is bounded, and hence $\{d^k\}_{\tilde{K}}$ is also bounded.
Since $\{L_k\}_{\tilde{K}}$ and $\{\|d^k\|\}_{\tilde{K}}$ are bounded, we have $G(x^k)\to_{\tilde{K}}0$.
Consequently, $\lim_{k\to\infty}\min_{0\le l\le k}G(x^l)=0$ and $x^*$ is a stationary point.

Next, we consider the case where $\{L_k\}$ is unbounded.
Let $k_i\coloneqq\inf\{k\ge0\mid L_k/L_{-1}\ge2^i\}<\infty$ for $i\ge1$ and $K_0\coloneqq\{k_1,k_2,\ldots\}$, then $K_0$ is an infinite set and $L_k\to_{K_0}\infty$.
For $k\in K_0$, as backtracking is performed, it holds that
\begin{equation}\label{eq:violation-p-f}
    F(\hat{x}^k)>F(x^k)-\frac{1}{2}\min\{1,2\tau_k\}G(x^k)+\frac{1}{4}L_k\min\{1,2\tau_k\}^2\|d^k\|^2\ge F(x^k)-\frac{1}{2}\min\{1,2\tau_k\}G(x^k),
\end{equation}
where $\hat{x}^k\coloneqq x^k+\min\{1,2\tau_k\}d^k$.
Since $\{x^k\}_{K_0}$ is bounded, there exists a subsequence $\{x^k\}_K$ of $\{x^k\}_{K_0}$ converging to some point $x^*$.

If $\liminf_{k\to\infty, k\in K}\tau_k>0$, by proceeding in the same manner as in the case where $\{L_K\}$ is bounded, we obtain the desired result.

The remaining part of this proof assumes that $\liminf_{k\to\infty, k\in K}\tau_k=0$.
There is a subsequence $\{\tau_k\}_{\tilde{K}}$ of $\{\tau_k\}_K$ such that $\tau_k\to_{\tilde{K}}0$ and that $\tau_k<1/2$ holds for all $k\in\tilde{K}$.
Since $\{d^k\}_{\tilde{K}}$ is bounded because of Proposition \ref{prop:property-p-f-GCGM} (iv), $\{\hat{x}^k\}_{\tilde{K}}$ also converges to $x^*$.
By the mean-value theorem, there exist a convex combination $\xi^k$ of $x^k$ and $\hat{x}^k$ such that
\begin{equation}
    F(x^k)-F(\hat{x}^k)=g(x^k)-g(\hat{x}^k)-\innerprod{\nabla f(\xi^k)}{\hat{x}^k-x^k}.
\end{equation}
Combining this with \eqref{eq:violation-p-f} yields
\begin{align}
    \tau_kG(x^k) &>F(x^k)-F(\hat{x}^k)\\
    &=g(x^k)-g(\hat{x}^k)-\innerprod{\nabla f(\xi^k)}{\hat{x}^k-x^k}\\
    &\ge2\tau_k(g(x^k)-g(v^k))-\innerprod{\nabla f(\xi^k)}{\hat{x}^k-x^k}\\
    &=2\tau_k(g(x^k)-g(v^k))-\innerprod{\nabla f(x^k)}{2\tau_kd^k}+\innerprod{\nabla f(x^k)-\nabla f(\xi^k)}{2\tau_kd^k}\\
    &=2\tau_kG(x^k)+2\tau_k\innerprod{\nabla f(x^k)-\nabla f(\xi^k)}{d^k}
\end{align}
for $k\in\tilde{K}$.
By rearranging the above and dividing both sides by $\tau_k$, we obtain
\begin{equation}\label{eq:bound-F-W-gap-p-f}
    G(x^k)\le2\innerprod{\nabla f(\xi^k)-\nabla f(x^k)}{d^k}\le2\|\nabla f(\xi^k)-\nabla f(x^k)\|\|d^k\|.
\end{equation}
Since both $\{x^k\}_{\tilde{K}}$ and $\{\hat{x}^k\}_{\tilde{K}}$ converge to $x^*$, $\{\xi^k\}_{\tilde{K}}$ also converges to $x^*$.
Using the continuity of $\nabla f$ and the boundedness of $\{d^k\}_{\tilde{K}}$, we see from \eqref{eq:bound-F-W-gap-p-f} that
\begin{equation}
    \liminf_{k\to\infty, k\in\tilde{K}}G(x^k)\le\lim_{k\to\infty, k\in\tilde{K}}2\|\nabla f(\xi^k)-\nabla f(x^k)\|\|d^k\|=0,
\end{equation}
which implies that $\lim_{k\to\infty}\min_{0\le l\le k}G(x^l)=0$ and $x^*$ is a stationary point.
This completes the proof.
\end{proof}

Unlike the result for Algorithm \ref{alg:GCGM}, the additional assumption of the boundedness of the lower level set $\{x\in\mathbb{E}\mid F(x)\le F(x^0)\}$ is made.
By virtue of the supercoerciveness of $g$, the assumption is satisfied if there exist $a,b\in\mathbb{R}$ such that $f(x)\ge a\|x\|+b$ holds for all $x\in\mathbb{E}$.
In particular, if $f$ is a convex function, it is bounded from below by an affine function, which ensures that the assumption holds.
It goes without saying that the assumption is also satisfied when $\dom g$ is bounded.

According to Theorem \ref{thm:subsequential-convergence-p-f}, even if the H\"older continuity of $\nabla f$ does not hold, it is possible to find a point $x$ satisfying $G(x)\le\varepsilon$ in a finite number of iterations.
Therefore, even if the H\"older continuity of $\nabla f$ is unknown, Algorithm \ref{alg:p-f-GCGM} can still find an approximate stationary point, and if $\nabla f$ is H\"older continuous, the convergence rate automatically adapts to its H\"older exponent.

\section{Conclusion}\label{sec:conclusion}
The paper has provided subsequential convergence result for two linesearch-based generalized conditional gradient methods without smoothness assumptions.
Particularly, we have analyzed the generalized conditional gradient method proposed by \citet{ito2023parameter}, which automatically adapts to the H\"older exponent.
The analysis demonstrates that their algorithm can be applied safely even when it is unclear whether the smooth part $f$ satisfies the H\"older condition.

Recently, \citet{bolte2024iterates} have investigated whether the whole sequence generated by the conditional gradient methods converges.
They have provided counterexamples to the convergence of sequence under several stepsize strategies, but the case using Armijo-type linesearch is not covered.
Identifying the conditions under which the sequence generated by the linesearch-based generalized conditional gradient method converges is an interesting future work.

\section*{Acknowledgments}
This work was supported partly by the JSPS Grant-in-Aid for Early-Career Scientists 25K21158.
The author would like to thank Masaru Ito for his valuable comments.

\bibliography{reference.bib}
\bibliographystyle{plainnat}

\end{document}